%% file: main.tex
\documentclass[a4paper, 11pt, cleardoublepage=empty,bibliography=totoc]{scrartcl} 
\input{commands.tex}
\usepackage{amssymb}
\usepackage[all]{xy}

\begin{document}
\pagestyle{plain}

\begin{center}
\Large{\textbf{Calabi-Yau Deformation Quantization }}
\end{center}
\hspace{0.5cm}
\begin{center}
    Jakob Ulmer
  \end{center}
\hspace{1cm}

\textbf{Abstract:} We record the Calabi-Yau version of Kontsevich's formality morphism from deformation quantization. As a special case we find that any unimodular holomorphic Poisson Calabi-Yau has a canonical closed deformation quantization of its resolved algebra of functions. A broader motivation is to find an explanation of Kontsevich's deformation quantization in the setting of Calabi-Yau categories via Sen-Zwiebach's string vertices.

\section{Introduction}

\input{intro}

\input{cyform}

\bibliography{refs}
\bibliographystyle{alpha}
\end{document}

%% file: commands.tex
\usepackage[left=2.5cm, right=2.5cm, top=0.5cm, bottom=2.5cm]{geometry}
\usepackage[colorlinks,
pdfpagelabels,
pdfstartview = FitH,
bookmarksopen = true,
bookmarksnumbered = true,
linkcolor = black,
plainpages = false,
hypertexnames = false,
citecolor = black] {hyperref}
\usepackage{adjustbox}
\usepackage{amsmath,amssymb,mathrsfs,amsfonts}

\usepackage{bm}
\newcommand*{\B}[1]{\ifmmode\bm{#1}\else\textbf{#1}\fi}
\usepackage[english]{babel}
\usepackage{tikz-cd}
\usepackage{amstext}
\usepackage{pdfpages}
\usepackage{mathtools}
\usepackage{stmaryrd}
\usepackage{xcolor}
\usepackage{xfrac} 
\usepackage[utf8]{inputenc}
\usepackage{url} 
\usepackage{pgfplots}
\usepackage[autooneside=false,headsepline,markcase=noupper]{scrlayer-scrpage}
\usepackage{blindtext}
\usepackage{enumitem}
\usepackage{verbatim}
\usepackage{graphicx}
\usepackage{mathtools}
\usepackage{makeidx}
\usepackage{tikz}
\usetikzlibrary{babel}

\addto\captionsenglish{}

\usetikzlibrary{%
	matrix,%
	calc,%
	arrows%
}

\definecolor{mycolor}{RGB}{196,19,47}
\definecolor{mygray}{gray}{0.4}

\setkomafont{section}{\normalfont\LARGE\bfseries}
\setkomafont{subsection}{\normalfont\large\bfseries}
\setkomafont{pagehead}{\normalfont\bfseries}
\setkomafont{title}{\normalfont\bfseries\Large}

\usepackage{chngcntr}
\counterwithin{equation}{section}
\usepackage{amsthm}

\theoremstyle{remark}
\newtheorem{remark}[equation]{Remark}

\theoremstyle{definition}
\newtheorem{df}[equation]{Definition}
\newtheorem{cons}[equation]{Construction}

\theoremstyle{plain}

\newtheorem*{corollary_nn_b}{Corollary B}
\newtheorem*{corollary_nn_c}{Corollary C}
\newtheorem*{thm_nn}{Theorem}
\newtheorem{theorem}[equation]{Theorem}

\newtheorem{corollary}[equation]{Corollary}

\newtheorem{lemma}[equation]{Lemma}

\newtheorem*{thm_nn_a}{Theorem A}
\newtheorem{prop}[equation]{Proposition}

\clearpairofpagestyles 
\ohead{  \ifstr{\rightbotmark}{\leftmark}{}{\rightbotmark}}
\ihead{\leftmark}
\rohead*{\pagemark}
\rehead*{\pagemark}
\lehead{\headmark}
\lohead{\headmark}

\usepackage[dvipsnames]{xcolor}
\usepackage{hyperref}
\hypersetup{colorlinks,citecolor={NavyBlue}}

\clearscrheadfoot
\cofoot[\pagemark]{\pagemark}

\makeatletter

\providecommand*{\rightbotmark}{\expandafter\@rightmark\botmark\@empty\@empty}
\makeatother
\makeindex

\usepackage{mathabx}

%% file: intro.tex
Deformation Quantization is the mathematical subject that seeks to deform a commutative algebra $A$, typically the algebra of functions on a symplectic or Poisson manifold, into a non-commutative algebra $A_\hbar$ with non-commutativity to first order determined by the Poisson structure.\footnote{This is motivated from physics. A symplectic manifold describes the phase space of a mechanical system, like $T^*\mathbb{R}^d$ for a particle in $\mathbb{R}^d$ according to Newton's law. Its algebra of smooth functions, the classical observables, would in above notation be $A$. Physicists want to associate a new object, a Hilbert space with its associated endomorphisms algebra, whose non-commutativity should be measured by the Poisson structure; think of the Heisenberg relation $[p,q]=\hbar$ with $p$ and $q$ conjugate for the symplectic structure. This endomorphisms algebra (in above notation denoted~$A_\hbar$) would be called a (deformation) quantization of algebra $A$.} It was shown by Fedosov that a deformation quantization for $A=C^\infty(M)$ always exists when $M$ is a symplectic manifold \cite{fe94}. Kontsevich generalized this in his seminal work \cite{kon03} to Poisson manifolds by using the following theorem, as we will explain momentarily.
\begin{thm_nn}[\cite{kon03}]
Given a manifold $M$ there is a quasi-isomorphism of homotopy Lie algebras
\begin{equation}\label{KF}
\mathcal{U}:\ \Big(PV^*(M)[1],0,[\_,\_]_S\Big)\rightarrow \Big(CH^*_{loc}\big(C^\infty(M)\big)[1],d_H,[\_,\_]_G\Big)
\end{equation}
between the polyvector fields Lie algebra with Schouten  bracket and the polydifferential Hochschild cochains Lie algebra with Hochschild differential and Gerstenhaber bracket.
    \end{thm_nn}
A homotopy Lie algebra has an associated `space' cut out by its Maurer-Cartan equation and a quasi-isomorphism of homotopy Lie algebras induces an equivalence between the associated Maurer-Cartan spaces. The associated space to the polyvector field Lie algebra encodes  possible Poisson structures on $M$ and the space associated to the Hochschild cochains Lie algebra all possible non-commutative deformations of the algebra of functions of $M$. This allowed Kontsevich to conclude from his theorem above mentioned description of deformation quantizations.

\medskip
The map of Kontsevich's theorem is extremely complicated and appears miraculous, written as an infinite sum over graphs and induced contractions of tensors. A successful and influential research effort allowed to understand in hindsight Kontsevich's work via the notion of operads \cite{kon99op,ta03formality, will15} etc. However, Kontsevich's initial inspiration came from string theory, which was later elaborated on by Cattaneo-Felder \cite{cattaneo2000path} using physical methods.
{One of our key motivations is to deepen this line of thought, but from a different angle. We would like to explain in a precise mathematical sense Kontsevich's deformation quantization from the perspective of string (field) theory and Sen-Zwiebach's open-closed string vertices \cite{SZ94,Zw98} and to connect with the operadic ideas, already alluded to by Kajiura-Stasheff in \cite{kajiura2006open} section~6.}
More precisely, since Kontsevich's work over 30 years ago so called \emph{topological} string theories have been put on solid mathematical ground - a topological string theory can be described by a Calabi-Yau (CY) category \cite{Kon94, Cos07a,Lu09}\footnote{Its objects describe the boundary conditions of open strings, the homomorphisms the open string Hilbert space, the Hochschild homology the closed string Hilbert space.} - and we place ourselves in this setting; this is different than the original setting, but related (proposition \ref{commdia}) and hopefully instructive. The prototypical example of a CY category is the category of coherent sheaves of a CY manifold (or a smooth proper CY variety over $\mathbb{C}$ \cite{Serre56}). In this note, to kick-off the efforts to our motivational question, we record the CY manifold version of Kontsevich's morphism \eqref{KF}, which to our knowledge is missing from the literature.
\begin{thm_nn_a}
    Given a $d$-dimensional compact Calabi-Yau manifold $X$ there is a quasi-isomorphism of homotopy Lie algebras (up to shifts)
\begin{equation}\label{JU}
\mathcal{U}_{CY}:\ \Big(PV^{*,*}(X)\llbracket t\rrbracket,\bar{\partial}+t\partial,[\_,\_]_S\Big)\rightarrow \Big(Cyc^*_{loc}\big(\Omega^{0,*}(X)\big),d_{cyc},\{\_,\_\}_o\Big)
\end{equation}
between the Calabi-Yau versions of the polyvector field Lie algebra and the Hochschild cochain Lie algebra, whose underlying chain complexes compute the cyclic cohomology of $Coh(X)$. 
\end{thm_nn_a}
 The Lie algebra on the left appears in Barannikov-Kontsevich's work on variation of Hodge structures and the extended moduli space of complex structures \cite{barannikov1997frobenius,barannikov1999generalized}, and in the context of topological string theory and B-model enumerative invariants in Costello-Li's work on BCOV theory \cite{coli12,CL15}.
 
 The Lie algebra on the right (a cousin of the Hochschild Lie algebra, see below) interestingly  also describes, from a physics perspective and via the Loday-Quillen-Tsygan map, classical observables of large $N$ holomorphic Chern-Simons theory \cite{CL15,GGHZ21, ha25b} (at least for $d$ odd) and is relevant to open B-model enumerative invariants \cite{Ul25}.
 
 The map \eqref{JU} thus exhibits a closed string - open string duality. Further, the linear part of $\mathcal{U}_{CY}$ has been used crucially in \cite{CL15} to couple BCOV theory with holomorphic Chern-Simons and to argue the existence of a unique quantization, leading to a definition of higher genus B-model invariants. 
 
 One may expect that the mirror A-model phenomena to theorem A are the bulk-boundary deformations discussed in~\cite{fooo11}.
 
 We elaborate on advocated perspective and the motivational question in follow-up work~\cite{Ul26b}.

\medskip
We can generalize theorem A to non-compact CY manifolds at the cost of working with a different Lie algebra $(CH^*_{loc}(\Omega^{0,*}(X))^{\sigma},d_H,[\_,\_]_G)$ on the right hand side (isomorphic to the original one in the compact case), a certain sub Lie algebra of the Hochschild cochain Lie algebra.

 A Maurer-Cartan element of $(\hbar CH^*_{loc}(\Omega^{0,*}(X))^{\sigma}\llbracket\hbar\rrbracket,d_H,[\_,\_]_G)$ deforms the dg-algebra  $\Omega^{0,*}(X)\llbracket\hbar\rrbracket$ to an $A_\infty$-algebra with for $n\in\mathbb{N}$ higher multiplications 
    \begin{equation}\label{hmsa}
    \Tilde{m}_n:\Omega^{0,*}(X)\llbracket\hbar\rrbracket^{\otimes n}\rightarrow \Omega^{0,*}(X)\llbracket\hbar\rrbracket[1-n] \end{equation}
    that satisfy 
        \begin{equation}\label{wsdsinon}
        \int_X\Tilde{m}_n(f_1,\cdots,f_n)g\omega=\pm \int_X\Tilde{m}_n(f_2,\cdots,f_n,g)f_1\omega\end{equation}
        for all compactly supported Dolbeault forms $f_i,g\in \Omega^{0,*}_c(X).$ We call those deformations of order $\hbar>0$, following \cite{wica12} section~8, closed (with respect to the volume form) deformation quantizations of the Dolbeault resolved algebra of holomorphic functions. If $X$ is compact it follows as in \cite{kon03,wica12} from theorem A:
    \begin{corollary_nn_b}
 Given a compact CY manifold $X$ there is a family of proper CY $A_\infty$-algebras on  $\Omega^{0,*}(X)\llbracket\hbar\rrbracket$ deforming to order $\hbar>0$ the initial dg algebra structure (but with standard integration pairing), parametrized by the Maurer-Cartan space of $\big(\hbar PV^{*,*}(X)\llbracket t\rrbracket\llbracket\hbar\rrbracket,\bar{\partial}+t\partial,[\_,\_]_S\big)$.
    \end{corollary_nn_b}

As a special case and an application to deformation quantization we find: 
    \begin{corollary_nn_c}
For every Calabi-Yau unimodular holomorphic Poisson manifold there is a canonical closed deformation quantizations of its Dolbeault resolved algebra of holomorphic functions, which coincides with (the complex version of) Kontsevich's deformation quantization.
\end{corollary_nn_c}
    Indeed, the quasi-isomorphism from \eqref{JU} restricts to a morphism injective on homology (up to some shifts)
\begin{equation}\label{FU}
\mathcal{U}_{CY}:\ \Big(ker_\partial PV^{*}_{hol}(X),0,[\_,\_]_S\Big)\rightarrow \Big( CH^*_{loc}\big(\Omega^{0,*}(X)\big)^\sigma,d_H,[\_,\_]_G\Big);
\end{equation}
 see proposition \ref{commdia} for the relation to Kontsevich's original morphism. 
 Maurer-Cartan elements in domain of \eqref{FU} are divergence free (also called unimodular) holomorphic Poisson structure.

\medskip
The proof of theorem A is just a combination of known results from \cite{wica12} on a cyclic formality morphism for manifolds with a volume form and the results of \cite{cal05} on a formality morphism for complex manifolds via the Dolbeault resolution; no new arguments are necessary. One first proves the theorem for $\mathbb{C}^d$ with the standard Calabi-Yau form, which follows from base change of the local version of \cite{wica12}. One globalizes using Fedosov-Dolgushev resolutions \cite{do05}, taking care regarding the holomorphic structure as explained in \cite{cal05}. For the globalization we have to pick a torsion free connection, which in Kontsevich's original work presented an additional degree of freedom. In the CY setting the connection has to be compatible with the complex structure and the CY form, which uniquely determines it; it is the Chern connection (equal to the  Levi-Civita connection) of the CY manifold. 

\medskip
\textbf{Acknowledgments.} I would like to thank Junwu Tu for a discussion that let me to write down this note, whose results were also anticipated in Costello-Li's work \cite{CL15}. The proof ideas are due to Kontsevich, Dolgushev, Calaque-Dolgushev-Halbout and Willwacher-Calaque, we just put them together.

%% file: cyform.tex
\section{Calabi-Yau dg Lie Algebras}\label{cydgla}
A $d$-dimensional Calabi-Yau manifold $(X,\omega)$ is a $d$-dimensional complex manifold $X$ with a nowhere vanishing $(d,0)$-form $\omega\in \Omega^{d,0}(X)$. In this section we introduce dg Lie algebras naturally associated to a Calabi-Yau manifold.
\subsection{Polyvector fields}\label{pvfields}
We denote the Dolbeault resolution of  polyvector fields by
    $$PV^{*,*}(X)=\Omega^{0,*}(X,\wedge^* TX).$$
Endowed with the differential $\bar\partial:PV^{*,*}(X)\rightarrow PV^{*,*+1}(X)$ and the wedge product it becomes a differential graded algebra. Contracting with $\omega\in \Omega^{d,0}(X)$ leads to an isomorphism
$$PV^{*,*}(X)\cong \Omega^{d-*,*}(X).$$
We denote by abuse of notation the differential on $PV^{*,*}(X)$ induced under this isomorphism from $$\partial: \Omega^{*,*}(X)\rightarrow \Omega^{*+1,*}(X)$$ by the same letter. It turns out that $\partial$ is not a derivation with respect to the algebra structure on $PV^{*,*}(X)$, but defines a shifted BV algebra, whose associated bracket we denote by $[\_,\_]_S$. One can check that this is just the standard Schouten bracket on polyvector fields. See eg. \cite{coli12} for more details. 
\begin{df}\label{cypv}
    The Calabi-Yau polyvector field differential graded Lie algebra is
    $$\Big(PV^{*,*}(X)\llbracket t\rrbracket[1],\bar\partial+t\partial,[\_,\_]_S\Big),$$
    where $t$ is of cohomological degree 2 and we extended the Schouten bracket $t$-linearly.
\end{df}
\subsection{Hochschild and cyclic cochains}\label{cychochdgla}
In definition \eqref{hCS} we set up the 'cyclic cohomology' analogue of the Gerstenhaber dg Lie algebra on Hochschild cochains, for a \emph{compact} CY manifold. In definition \eqref{dchcc} we find a replacement for this dg Lie algebra in the non-compact case, as we explain in lemma \eqref{iitcc}.

\medskip
Given a topological vector space $V$ obtained as the space of sections of a $\mathbb{Z}$-graded vector bundle $E\rightarrow X$ we denote by 
$$Cyc_{loc}^*(V)$$
the space of cyclically invariant local functionals on $V[1]$, see definition 3.10 of \cite{ha25b} and section 2.4 of \cite{CL15}. If $$V=\Omega^{0,*}(X)$$ for $(X,\omega)$ a compact $d-$dimensional CY manifold then $Cyc_{loc}^*(V)$
can be endowed with a $(d-2)$-shifted Lie bracket $\{\_,\_\}_o$ obtained from the integration pairing
\begin{equation}\label{pairing}
\Omega^{0,*}(X)\otimes \Omega^{0,*}(X)\rightarrow \mathbb{C},\ \ \alpha\otimes\beta\mapsto \int_X(\alpha\wedge\beta\wedge\omega),
\end{equation}
see section 2.4 of \cite{CL15} or more generally definition 3.7 and proposition 3.11 of \cite{ha25b}.\footnote{See also section 3.2 of \cite{GGHZ21}, \cite{Ul25a} and references therein making clear the ‘open string' nature of this bracket (for the finite-dimensional, cyclic $A_\infty$-algebra setting).} The dg algebra structure $\big(\Omega^{0,*}(X),\bar{\partial},\wedge\big)$ induces a Maurer-Cartan element, the holomorphic Chern-Simons functional, of the $(d-2)$-shifted Lie algebra $\big(Cyc^*(\Omega^{0,*})_{loc},\{\_,\_\}_o\big)$  
\begin{equation}\label{hCSfun}
S_{hCS}\in MCE\Big(Cyc_{loc}^*(\Omega^{0,*}(X)),\{\_,\_\}_o\Big),
\end{equation}

see again section 2.4 of \cite{CL15} and section 3.2 of \cite{ha25b} for a general treatment. We can twist the Lie algebra $\big(Cyc^*(\Omega^{0,*})_{loc},\{\_,\_\}_o\big)$ by the Maurer-Cartan element \eqref{hCSfun} and denote the resulting differential by 
$$d_{cyc}:=\{S_{hCH},\_\}_o.$$
Finally shifting by $(d-2)$, gives us the dg Lie algebra that we are interested in.
\begin{df}\label{hCS}
    Given a compact $d$-dimensional CY manifold $(X,\omega)$ we denote by 
    $$\Big(Cyc^*_{loc}\big(\Omega^{0,*}(X)\big)[2-d],d_{cyc},\{\_,\_\}_o\Big)$$
    the dg Lie algebra of cyclically invariant local functionals on Dolbeault forms together with the Hochschild type differential and the open string bracket.
\end{df}

\smallskip
We would like to define a variant of the dg Lie algebra from definition \ref{hCS} for non-compact CY manifolds, but run into the problem that the pairing \eqref{pairing} used to define the Lie bracket doesn't make sense in the non-compact case. However, the existence of a volume form allows to define a different `cyclic Hochschild cochain' complex (definition \ref{dchcc}), which turns out to be isomorphic to the one from definition \ref{hCS} in the compact case (lemma \ref{iitcc}).

\medskip
Recall that given a dg-algebra (or $A_\infty$-algebra) $A$ its Hochschild cochain complex is defined as 
$$CH^*(A)=\prod_{k=1}^\infty Hom(A[1]^{\otimes k},A),$$
together with the Hochschild differential $d_H$ see eg. section 2 of \cite{wica12}. Further $\big(CH^*(A)[1],d_H\big)$ can be endowed with a dg Lie algebra structure via the Gerstenhaber bracket $[\_,\_]_G$, see eg. section 2 of \cite{wica12}. Given $M$ a smooth manifold we can define the subcomplex of Hochschild cochains of $A=C^\infty(M)$ generated by $A$ and differential operators under the cup product; the Hochschild differential and Gerstenhaber bracket descend to these spaces (see eg. section 2.1 of \cite{wica12}). Given a complex manifold $X$ we can make the same definition both for the algebra of holomorphic functions $A=\mathcal{O}_{hol}(X)$ respectively for the dg-algebra of Dolbeault forms $A=\Omega^{0,*}(X)$, which we summarize in the following definition.
\begin{df}
    Given a complex manifold denote the dg Lie algebras of polydifferential (or local) operators of the algebra of holomorphic functions  by $$\Big(CH^*_{loc}\big(\mathcal{O}_{hol}(X)\big)[1],d_H,[\_,\_]_G\Big)$$
    respectively the dg Lie algebras of polydifferential (or local) operators of the dg-algebra of Dolbeault forms by 
 $$\Big(CH^*_{loc}\big(\Omega^{0,*}(X)\big)[1],d_H,[\_,\_]_G\Big).$$
     \end{df}
    \smallskip
    
To further tackle the non-compact case we make an auxiliary construction.
\begin{cons}\label{nfcon}
We define auxiliary graded vector spaces, exactly as in section 2.2 of \cite{wica12}, but use a different notation.
   Given a (not necessarily compact) $d$-dim. CY manifold $(X,\omega)$ one sets
\begin{equation}\label{ibidbnf}
pCyc^n(\mathcal{O}_{hol}(X)):=\big(CH^{n+1}_{loc}(\mathcal{O}_{hol}(X))\otimes_{\mathcal{O}_{hol}} \Omega^{d,d}(X)\big)_{T^1(X)}\end{equation}
and
\begin{equation}\label{eiwvzw}
pCyc^*(\Omega^{0,*}(X)):=\big(CH^{n+1}_{loc}(\Omega^{0,*}(X))\otimes_{C^\infty(X)} \Omega^{d,0}(X)\big)_{T^1(X)}.
\end{equation}
These chain complexes have an action by the group $\mathbb{Z}_{n+1}$, whose generator we denotes by $\sigma$. As in section 2.2 of \cite{wica12} we have isomorphisms of graded vector spaces
\begin{equation}\label{blabla}
\phi:CH^{*}_{loc}\big(\mathcal{O}_{hol}(X)\big)[1]\cong pCyc^*\big(\mathcal{O}_{hol}(X)\big)[2d-2] \ \ \text{and} \ \ CH^{*}_{loc}\big(\Omega^{0,*}(X)\big)[1]\cong pCyc^*\big(\Omega^{0,*}(X)\big)[d-2],\end{equation}
induced from cup product with the identity operator and tensoring with $\omega\wedge \bar{\omega}$ respectively $\omega$. 
\end{cons}

The isomorphisms \eqref{blabla} allow us to define the cyclic group action generated by $\sigma$ also on $CH^{*}_{loc}(\mathcal{O}_{hol}(X))$ and $CH^{*}_{loc}(\Omega^{0,*}(X))$. We call the invariants under this action $\omega$-cyclic. As in proposition 6 and 7 of \cite{wica12} it follows that the space of $\omega$-cyclic Hochschild cochains is closed under Hochschild differential and Gerstenhaber bracket, leading to following definition.
\begin{df}\label{dchcc}
Given a (not necessarily compact) CY manifold $(X,\omega)$ denote by
     $$\Big(CH^{*}_{loc}\big(\mathcal{O}_{hol}(X)\big)[1]^\sigma,d_H,[\_,\_]_G\Big),$$
     the $\omega$-cyclically invariant polydifferential operator dg Lie algebra of the algebra of holomorphic functions respectively by 
     $$\Big(CH^{*}_{loc}\big(\Omega^{0,*}(X)\big)[1]^\sigma,d_H,[\_,\_]_G\Big)$$
      the $\omega$-cyclically invariant polydifferential operator dg Lie algebra of the algebra of the dg algebra of Dolbeault forms.
\end{df}
The two dg Lie algebras from definition \eqref{dchcc} are related through following lemma.
\begin{lemma}\label{ffhpoin}
    Given a CY manifold $(X,\omega)$ there is a quasi-isomorphism of dg Lie algebras
    $$\Omega^{0,*}(X,\Big(CH^*_{loc}\big(\mathcal{O}_{hol}(X)\big)[1]^\sigma,d_H,[\_,\_]_G\Big))\rightarrow \Big(CH^*_{loc}\big(\Omega^{0,*}(X)\big)[1]^\sigma,d_H,[\_,\_]_G\Big).$$
\end{lemma}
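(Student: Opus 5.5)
Following \cite{cal05}, we first build the map without the cyclic condition and then check that it respects $\sigma$. A polydifferential operator $D$ on $\mathcal{O}_{hol}$ is given locally by holomorphic coefficients times monomials in $\partial/\partial z_i$. We extend it to Dolbeault forms by letting the $\partial/\partial z_i$ act on the coefficients of $(0,*)$-forms. In a holomorphic chart these operators commute with $\bar\partial$ and are $\Omega^{0,*}$-linear up to Koszul signs. Given $\alpha\otimes D$ with $\alpha\in\Omega^{0,*}(X)$ and $D\in CH^k_{loc}(\mathcal{O}_{hol})$, we define
$$\Phi(\alpha\otimes D)(f_1,\dots,f_k)=\pm\,\alpha\wedge D(f_1,\dots,f_k),\qquad f_i\in\Omega^{0,*}(X).$$
This is chart independent because the transition maps are holomorphic. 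The Dolbeault differential on the source is $\bar\partial$ acting on $\alpha$ together with the $\bar\partial$-connection on the holomorphic bundle of polydifferential operators. The target differential $d_H$ contains the term $[m_1,\_]_G$ with $m_1=\bar\partial$, and $D$ commutes with $\bar\partial$ on $(0,*)$-forms. Hence $[\bar\partial,\Phi(\alpha\otimes D)]=\Phi(\bar\partial\alpha\otimes D)$, and $[\wedge,\_]_G$ matches the Hochschild differential of $\mathcal{O}_{hol}$. So $\Phi$ is a chain map. The same local computation shows that $\Phi$ intertwines the Gerstenhaber brackets, since composing extended operators is the extension of the composite and the wedge factors $\alpha$ are $\Omega^{0,*}$-linear. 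Up to sign conventions this is exactly \cite{cal05}.

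Next we check $\sigma$-equivariance. Under the isomorphisms \eqref{blabla}, $\sigma$ is computed by integrating by parts against $\omega\wedge\bar\omega$ on the holomorphic side and against $\omega$ on the Dolbeault side. Concretely, $\sigma$ is the transposition of the operator $f_0\,D(f_1,\dots,f_k)\,\mathrm{vol}$ modulo the $T^1(X)$-coinvariants, i.e.\ modulo total derivatives. Locally we choose coordinates in which $\omega=dz_1\wedge\dots\wedge dz_d$. The holomorphic adjoint of $\partial_{z_i}$ is then $-\partial_{z_i}$ in both settings. The extra $\bar\omega$ in the holomorphic case only reflects that the $\bar z$-directions are inert for operators built from $\partial_z$'s. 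So the $\omega$-transpose of $\Phi(\alpha\otimes D)$ equals $\Phi(\alpha\otimes\sigma D)$, with Koszul signs from moving $\alpha$, and $\Phi$ restricts to $\sigma$-invariants. Since $\sigma$ has finite order and we work in characteristic zero, taking invariants is exact: it is the image of the averaging projector. This reduces the quasi-isomorphism claim to the non-invariant one, provided the averaging projectors commute with $\Phi$, which follows from the equivariance just shown.

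Finally we prove that $\Phi$ is a quasi-isomorphism, and we expect this to be the main obstacle. We filter both sides by the number of arguments $k$ and by the order of differential operators, and compare associated graded complexes. On the target, the HKR-type computation for local cochains gives cohomology $PV^{*,*}(X)$ with respect to $\bar\partial$; the Hochschild differential is handled in each polynomial degree by a Koszul/HKR homotopy that is $C^\infty$-linear. On the source we get $\Omega^{0,*}(X,\wedge^*TX)$ by the holomorphic HKR theorem for local cochains on $\mathcal{O}_{hol}$. The hard part is making the spectral sequence argument converge with the products over $k$: we use a complete, exhaustive filtration by $k$ and compare $E_1$ pages. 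It remains to check that the HKR homotopy on the target can be chosen so that it preserves $\sigma$-invariants. Alternatively, by the exactness of invariants noted above, it suffices to know $\Phi$ is a quasi-isomorphism before taking invariants. A Dolbeault (fine sheaf) argument then finishes: both sides are global sections of fine resolutions of the sheaf $CH^*_{loc}(\mathcal{O}_{hol})$, so $\Phi$ is a quasi-isomorphism locally on polydiscs and hence globally.
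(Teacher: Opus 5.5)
You say that the $\omega$-transpose of $\Phi(\alpha\otimes D)$ equals $\Phi(\alpha\otimes\sigma D)$ up to Koszul signs, with $\alpha$ inert. That is false, so the $\sigma$-equivariance step fails as written, and for the same reason so does your justification of bracket compatibility. Computing $\sigma$ means integrating by parts in the $\partial_z$-directions, and those derivatives then hit the coefficient $\alpha$, which is only smooth. Take $d=1$, $\omega=dz$, and the $\omega$-cyclic (divergence-free) cochain $D=\partial_z$. Up to Koszul signs,
\[
\int_X f_0\,\alpha\,\partial_z f_1\,\omega=-\int_X(\partial_z f_0)\,\alpha\,f_1\,\omega-\int_X f_0\,(\partial_z\alpha)\,f_1\,\omega .
\]
So $\Phi(\alpha\otimes\partial_z)$ is cyclic only when $\partial_z\alpha=0$, and $\Phi$ does not send $\alpha\otimes(\text{cyclic } D)$ into $CH^*_{loc}(\Omega^{0,*}(X))^\sigma$. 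Likewise $\Phi(\alpha_1\otimes D_1)\circ_i\Phi(\alpha_2\otimes D_2)$ differentiates $\alpha_2$, so it is not $\pm\Phi(\alpha_1\alpha_2\otimes D_1\circ_i D_2)$. The underlying issue is that $[\_,\_]_G$ and $\sigma$ on $CH^*_{loc}(\mathcal{O}_{hol})$ are differential operators in the coefficients, not $\mathcal{O}_{hol}$-linear maps. Their naive $\Omega^{0,*}$-linear extensions are therefore not even well defined on $\Omega^{0,*}\otimes_{\mathcal{O}_{hol}}CH^*_{loc}(\mathcal{O}_{hol})$: compare $\alpha h\otimes D$ with $\alpha\otimes hD$.

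The repair is to give $\Omega^{0,*}(X,CH^*_{loc}(\mathcal{O}_{hol}))$ the bracket and $\sigma$ in which the $\partial_z$'s also act on the smooth coefficients. This is well defined because the transition maps are holomorphic. The source is then the invariants of this extended $\sigma$, a space that is not of the form $\Omega^{0,*}\otimes(\text{cyclic cochains})$. Equivalently, note that $\Phi$ is injective and that its image (cochains with $(0,*)$-form coefficients built only from $\partial_z$'s) is stable under $d_H$, $[\_,\_]_G$ and $\sigma$, and transport the structure. With this reading, equivariance and bracket compatibility hold by construction. The extended $\sigma$ commutes with $\bar\partial$, so its invariants still resolve $CH^*_{loc}(\mathcal{O}_{hol})^\sigma$, and your reduction to the non-invariant statement goes through.

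Your final local-to-global step is the paper's proof. The paper uses the zig-zag $\mathcal{O}_1\leftarrow CH^*_{loc}(\mathcal{O}_{hol})\rightarrow\mathcal{O}_2$ of local quasi-isomorphisms of fine sheaves. Your $\Phi$ composed with the left map is the right map, so you upgrade that zig-zag to an actual morphism. Reducing to the non-invariant case first is also cleaner, since sheaves of $\sigma$-invariants are not $C^\infty_X$-modules.

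One more point: the local quasi-isomorphism on the target side is not just the $\bar\partial$-Poincaré lemma. HKR for local cochains on $\Omega^{0,*}$, viewed as functions on a graded manifold, also produces polyvectors in the $\partial_{\bar z}$ and $\partial/\partial(d\bar z)$ directions. These must cancel in Koszul pairs under $[\bar\partial,\_]$ before you reach $PV^{*,*}$. Your sketch skips this step, as does the paper's one-line appeal.
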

\begin{proof}
 We can define in a straightforward way fine sheaves of dgla's  $\mathcal{O}_1$ and $\mathcal{O}_2$ whose value on an open subset $V\subset X$ is  $$\mathcal{O}_1(V)=\Omega^{0,*}(V,\Big(CH^*_{loc}\big(\mathcal{O}_{hol}(V)\big)[1]^\sigma,d_H,[\_,\_]_G\Big))$$
 respectively
 $$\mathcal{O}_2(V)=\Big(CH^*_{loc}\big(\Omega^{0,*}(V)\big)[1]^\sigma,d_H,[\_,\_]_G\Big).$$
 The non-derived global sections of $\mathcal{O}_1$ and $\mathcal{O}_2$, equal to its derived global sections since the sheaves are fine, give the dgla's in the lemma. There is a zig-zag of sheaves of dg Lie algebra
 $$\mathcal{O}_1\leftarrow \Big(CH^*_{loc}(\mathcal{O}_{hol})[1],d_H,[\_,\_]_G\Big)\rightarrow \mathcal{O}_2.$$

 Due to the $\bar{\partial}$-Poincaré lemma those maps of sheaves of dg Lie algebras  are locally quasi-isomorphisms; or in other words $\mathcal{O}_1$ and $\mathcal{O}_2$ are equivalent in the homotopy category. The lemma then follows since taking derived global sections sends equivalence to equivalence.   
\end{proof}

Finally we can explain how the two 'cyclic' dg Lie algebras on Hochschild cochains are related:
\begin{lemma}\label{iitcc}
 Given a compact $d$-dim. CY manifold $(X,\omega)$ there is an iso of dg Lie algebras $$F: \Big( CH^{*}_{loc}\big(\Omega^{0,*}(X)\big)[1]^{\sigma},d_H,[\_,\_]_G\Big)\cong\Big(Cyc^*_{loc}\big(\Omega^{0,*}(X)\big)[d-2],d_{cyc},\{\_,\_\}_o\Big)$$
 induced by integration
$$f\mapsto \Big( F(f):(a_1,\cdots ,a_n)\mapsto \int_Xf(a_1,\cdots, a_n)\Big),$$
where we understand $f$ as an element of the right hand side of \eqref{eiwvzw} under isomorphism \eqref{blabla}.
\end{lemma}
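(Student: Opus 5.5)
The plan is to factor $F$ as a composite of maps that are each visibly bijective and compatible with the structure. Via \eqref{blabla} and \eqref{eiwvzw} I view a Hochschild cochain $f$ of arity $n$ as a density-valued polydifferential operator $\tilde f(a_1,\dots,a_n,a_{n+1})=f(a_1,\dots,a_n)\,a_{n+1}\wedge\omega$, taken modulo the image of vector fields. Concretely, I take coinvariants under $T^1(X)$ acting by Lie derivative, so that total derivatives are quotiented out. Integration over the compact $X$ then gives a well-defined local functional $(a_1,\dots,a_{n+1})\mapsto\int_X f(a_1,\dots,a_n)a_{n+1}\omega$, since total derivatives integrate to zero. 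This is the map $F$.

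\textbf{Step 1: $F$ is an isomorphism of graded vector spaces, equivariant for the cyclic actions.} By \eqref{blabla}, $CH^*_{loc}[1]$ is identified with $pCyc^*$. I claim that integration identifies $pCyc^*$ with all local functionals on $\Omega^{0,*}(X)[1]^{\otimes(n+1)}$, without any invariance condition. This is the standard fact that local functionals on a compact manifold are densities modulo total derivatives. It is the same identification used in \cite{wica12}, section 2.2, and in the definition of local functionals in \cite{CL15,ha25b}. The cyclic generator $\sigma$ on $pCyc$ permutes the $n+1$ slots, with the Koszul sign and the sign coming from the odd degree shift of $\omega$. Under integration this is exactly the cyclic permutation of arguments of a functional. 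Hence $\sigma$-invariants map bijectively onto cyclically invariant local functionals. This proves $F$ is an isomorphism onto $Cyc^*_{loc}$. I will also check that the shift matches: $d-2$ comes from the pairing \eqref{pairing} having degree $-d$ together with the two standard shifts.

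\textbf{Step 2: $F$ intertwines the differentials and brackets.} The Hochschild differential is the Gerstenhaber bracket with the multiplication $m=\bar\partial+\wedge$, that is $d_H=[m,\_]_G$. Likewise $d_{cyc}=\{S_{hCS},\_\}_o$, and $F(m)=S_{hCS}$, because $\int_X m(a_1,a_2)a_3\omega$ is precisely the cubic-plus-quadratic holomorphic Chern–Simons functional. So it suffices to show $F[f,g]_G=\{Ff,Fg\}_o$ for $\omega$-cyclic $f,g$.

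\textbf{Main obstacle: matching the brackets.} This is the step I expect to be hardest, and I will follow the argument of \cite{wica12} (propositions 6–7). The bracket $\{\_,\_\}_o$ is obtained by contracting one input of $Ff$ with one input of $Fg$ using the inverse of the nondegenerate pairing \eqref{pairing}, then symmetrizing cyclically. For cyclic $f$ the pairing is invariant, $\int f(a_1,\dots,a_n)a_{n+1}\omega=\pm\int f(a_2,\dots,a_{n+1})a_1\omega$. Using this invariance, contracting the output slot of $f$ against an input of $g$ gives exactly $\int g(\dots,f(\dots),\dots)\,a\,\omega$. Summing over cyclic positions reproduces the sum over insertions in $f\circ g\pm g\circ f$, with the remaining terms producing the cyclic symmetrization. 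The contraction with the kernel of the pairing, a distributional delta kernel, is well defined on local functionals because one of the two inserted arguments is a polydifferential output. I will verify this by integrating by parts against $\omega$, which is where nondegeneracy of the pairing and compactness of $X$ enter. The remaining work is sign bookkeeping, which I will handle by checking the formula on decomposable inputs.
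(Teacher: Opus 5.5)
Your proposal is correct and follows the same route as the paper, whose proof is simply ``this follows from unraveling the definitions''. Your Steps 1 and 2 are that unraveling spelled out: integration identifies $pCyc$ with local functionals and $\sigma$ with cyclic rotation, $F(m)=S_{hCS}$ reduces $d_H=[m,\_]_G$ versus $d_{cyc}=\{S_{hCS},\_\}_o$ to bracket compatibility, and the brackets then match by the standard necklace-bracket computation as in \cite{wica12}. One small slip: the sign that $\omega$ contributes under $\sigma$ is governed by $|\omega|=d$, so it is not an odd shift in general; this affects only the sign bookkeeping.
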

\begin{proof}
    This follows from unraveling the definitions.
\end{proof}
\section{Dolgushev-Fedosov resolutions}\label{dfs}
We explain, following \cite{kon03, do05,cal05,wica12}, how we can embed the non-linear (over $C^\infty(X)$) CY dg Lie algebras from definition \ref{cypv} and \ref{dchcc} into linear (over $C^\infty(X)$) dg Lie algebras, which is done via Gelfand-Kazhdan formal geometry or more precisely using Dolgushev-Fedosov resolutions. This is the important tool to globalize the local CY deformation quantization morphism from the next section.

\medskip
Given a complex manifold we recall from \cite{cal05}, section 5 (applied to the canonical holomorphic Lie algebroid $T^{1,0}(X)$) 
\begin{itemize}
\item the sheaf of algebras $\mathcal{O}^f_{hol}$ of formal functions on $T^{1,0}(X)$ tangent to the fibers, 
\item the sheaf of dgla's $\big(\mathcal{T}^*_{hol}[1],0,[\_,\_]_S^f\big)$ of formal polyvector fields on $T^{1,0}(X)$ tangent to the fibers, 
\item the sheaf of dgla's $\big(\mathcal{D}^*_{hol}[1],d_H^f,[\_,\_]_G^f\big)$ of formal polydifferential operators on $T^{1,0}(X)$ tangent to the fibers.
\item the sheaf of dg algebras $\big(\mathcal{A}^*_{hol},d_{dR}^f,\wedge\big)$ of formal de Rham forms on $T^{1,0}(X)$ tangent to the fibers.
\end{itemize}
We consider the respective dgla's (and dg algebra) obtained by taking de Rham forms with values in mentioned dgla's (dg algebra). The Fedosov differential (theorem 5.9 of \cite{cal05}, following \cite{do05}) allows to perturb the resulting dgla's (and dg algebra), giving us the dgla's (and the dg algebra) on the right side of following theorem. 
\begin{theorem}[\cite{cal05}]\label{holdgr}
Given an affine torsion free connection $\nabla$ compatible with the complex structure there are quasi-isomorphisms of dg Lie algebras 
\begin{equation}\label{dfrpf}
\lambda_{\nabla}:\ \Big(PV^{*,*}(X)[1],\bar{\partial},[\_,\_]_S\Big)\rightarrow \Big(\Omega^*(X,\mathcal{T}^*_{hol}[1]),D,[\_,\_]_S^f\Big)\end{equation}
\begin{equation}\label{dfrpo}
\lambda_{\nabla}:\ \Omega^{0,*}(X,\Big( CH^{*}_{loc}(\mathcal{O}_{hol}(X))[1],d_H,[\_,\_]_G\Big))\rightarrow \Big(\Omega^*(X,\mathcal{D}^*_{hol}[1]),d_H^f+D,[\_,\_]_G^f\Big)\end{equation}
and a quasi-isomorphism of dg algebras
\begin{equation}
\lambda_{\nabla}:\ \Big(\Omega^{*,*}(X),\partial+\bar{\partial},\wedge\Big)\rightarrow \Big(\Omega^*(X,\mathcal{A}^*_{hol}),\partial^f+D,\wedge\Big).\end{equation}
\end{theorem}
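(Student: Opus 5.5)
We will prove Theorem \ref{holdgr} by the standard Fedosov--Dolgushev argument, run in the holomorphic Lie algebroid setting of \cite{cal05}, section 5.

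\textbf{Step 1: the Fedosov differential.} On the bundle of formal fiberwise functions $\mathcal{O}^f_{hol}$, which is locally $\widehat{S}(T^{*1,0}X)$ in fiber coordinates $y^i$, we write
$$D=\nabla-\delta+A.$$
Here $\nabla$ is the given torsion-free connection compatible with the complex structure. Because $\nabla$ preserves type, its $(0,1)$-part is $\bar\partial$ and its $(1,0)$-part is a holomorphic-direction connection. The operator $\delta=dz^i\,\partial/\partial y^i$ is the Koszul differential, and $A\in\Omega^1(X,\mathcal{T}^0_{hol})$ is a correction that is at least quadratic in $y$. We solve $D^2=0$ recursively in the $y$-degree, using the contracting homotopy $\delta^{-1}$ and the normalization $\delta^{-1}A=0$.

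Torsion-freeness gives $[\nabla,\delta]=0$. The curvature of $\nabla$ is the obstruction, and it is $\delta$-exact in degree $\ge 1$, so Fedosov's iteration converges. Compatibility with the complex structure ensures that every term involves only holomorphic fiber directions. As a result $D$ acts by fiberwise holomorphic vector fields and is a derivation of all the fiberwise structures: the Schouten bracket, the Gerstenhaber bracket and $d_H^f$, and the de Rham operator $\partial^f$. This is what makes $(\Omega^*(X,\mathcal{T}^*_{hol}[1]),D,[\_,\_]^f_S)$ and the other two targets dg objects.

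\textbf{Step 2: cohomology of $D$.} We filter by form degree plus $y$-degree, so that the associated graded differential is $-\delta$ together with the $\bar\partial$-type pieces. The $\delta$-Poincaré lemma (formal Koszul acyclicity) shows that the cohomology of $D$ on $\Omega^*(X,\mathcal{T}^*_{hol})$ is concentrated in $(1,0)$-form degree $0$. There it is identified with $\Omega^{0,*}(X,\wedge^*TX)$ with differential $\bar\partial$. The same holds for $\mathcal{D}^*_{hol}$, giving $\Omega^{0,*}$ with coefficients in holomorphic polydifferential operators and differential $d_H+\bar\partial$, and for $\mathcal{A}^*_{hol}$.

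The maps $\lambda_\nabla$ are built by the Taylor-expansion recursion $\lambda(v)=v+\delta^{-1}(\ldots)$: one solves $D\lambda(v)=\lambda(\bar\partial v)$ with $\delta^{-1}\lambda(v)$ fixed. Concretely, $\lambda_\nabla$ is the formal Taylor expansion along the exponential map of $\nabla$. Since the fiberwise operations are $C^\infty(X)$-linear and the expansion is multiplicative, $\lambda_\nabla$ respects brackets and products. It therefore gives morphisms of dgla's and of dg algebras that are quasi-isomorphisms onto the $D$-flat sections. For \eqref{dfrpo} we use in addition that polydifferential operators are generated under the cup product by functions and differential operators, which transport compatibly under $\lambda_\nabla$.

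\textbf{Step 3: quasi-isomorphism.} A spectral sequence for the filtration in Step 2 then shows that the inclusion of $D$-flat sections, followed by the embedding into all forms, is a quasi-isomorphism. For the last map we also check that $\partial$ on $\Omega^{*,*}(X)$ corresponds to the fiberwise de Rham differential $\partial^f$ under Taylor expansion.

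\textbf{Main obstacle.} The delicate point is Step 1 in the holomorphic setting. Fedosov's connection must mix $\bar\partial$ in the base with holomorphic formal fibers, and we need the $(0,1)$-part of $D$ to be exactly $\bar\partial$ with no correction. This requires using the Lie algebroid $T^{1,0}X$ and the compatibility of $\nabla$ with the complex structure, so that the curvature lies in $\Omega^{1,1}\oplus\Omega^{2,0}$ with holomorphic fiber dependence. I would first check, directly in local holomorphic coordinates, that $A$ can be chosen with no $d\bar z$-components. I expect this to follow from the $(0,2)$-part of the curvature vanishing.
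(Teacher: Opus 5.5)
Your outline is the Fedosov--Dolgushev resolution in the Lie algebroid setting, which is what the paper relies on: its proof only cites Propositions 5.11 and 5.12 of \cite{cal05} for the holomorphic Lie algebroid $T^{1,0}X$. However, the step you single out as the main obstacle is resolved incorrectly. $A$ cannot in general be chosen without $d\bar z$-components, and $D^{0,1}=\bar\partial$ is neither attainable nor needed.

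Torsion-freeness together with type preservation gives $\nabla_{\partial_{\bar z_j}}\partial_{z_i}=0$; type preservation alone does not suffice. So $\nabla^{0,1}=\bar\partial$, and only the symbols $\Gamma^l_{ik}$ remain. These are not holomorphic, so the curvature has a $(1,1)$-part $R^{1,1}=\pm\, d\bar z^j\wedge dz^i\,\partial_{\bar z_j}\Gamma^l_{ik}\,y^k\partial_{y^l}$, a Dolbeault representative of the Atiyah class of $TX$. Let $A^{0,1}_{(2)}$ be the part of $A^{0,1}$ quadratic in $y$. In lowest $y$-degree, the $(1,1)$-component of $D^2=0$ reads $\delta A^{0,1}_{(2)}=R^{1,1}$. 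Hence $A^{0,1}\neq 0$ unless $\bar\partial\Gamma=0$, i.e.\ unless $\nabla$ is a holomorphic torsion-free connection on $TX$. By Atiyah, no such connection exists on a K3 surface or a quintic threefold. For the Chern connection of a Ricci-flat metric, $R^{1,1}$ is the whole curvature, so your check fails exactly in the paper's setting. The vanishing of $R^{0,2}$ is irrelevant here.

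You should instead keep $A=A^{1,0}+A^{0,1}$. The $(0,1)$-correction is exactly what makes $\lambda_\nabla$ a chain map.

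\begin{itemize}
\item For $v\in\Omega^{0,*}(X,\wedge^*TX)$, the recursion $\lambda(v)=v+\delta^{-1}(\nabla+A)\lambda(v)$ yields an element with no $dz$-components.
\item Since $\delta^{-1}$ only contracts $dz$'s, $\lambda(v)$ is built from $\nabla^{1,0}+A^{1,0}$ alone. It is a Taylor expansion with non-holomorphic coefficients, so $\bar\partial\lambda(v)\neq\lambda(\bar\partial v)$ in general. Already for holomorphic $v$ one has $\bar\partial(y^i\nabla_i v)=d\bar z^j\,y^i\,\partial_{\bar z_j}\Gamma^l_{ik}v^k\partial_{y^l}$.
\item The usual $\delta^{-1}$-argument gives $D^{1,0}\lambda(v)=0$.
\item Set $u=D^{0,1}\lambda(v)-\lambda(\bar\partial v)$. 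Then $u$ has no $dz$-components and is $D^{1,0}$-closed. Also $\sigma(u)=0$: the fiber coefficients of $A$ have $y$-degree $\ge 2$, so $\sigma[A,\cdot\,]=0$ on polyvector fields and on forms.
\item Hence $u=\delta^{-1}(\nabla^{1,0}+A^{1,0})u$. Since this operator strictly raises $y$-degree, $u=0$.
\end{itemize}

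With this repair your Steps 2 and 3 go through. For example, filter by anti-holomorphic form degree: the $E_0$-differential is $D^{1,0}$, and the $E_1$-differential is $\bar\partial$ via $\lambda_\nabla$.

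On polydifferential operators $\sigma[A^{0,1},\cdot\,]$ does not vanish. There $A^{0,1}$ is precisely what produces the Dolbeault operator of the bundle of holomorphic polydifferential operators. So in that case you identify $D^{1,0}$-flat sections through their action on $\lambda$ of functions.
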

\begin{proof}
Indeed, this follows from proposition 5.11 and 5.12 of \cite{cal05} (respectively the sentences just below) applied to the canonical holomorphic Lie algebroid $T^{1,0}(X).$  
\end{proof}

The goal of this section is to explain how to adapt the Dolgushev-Fedosov dg Lie algebras to the Calabi-Yau setting by mimicking the definitions from section \ref{cydgla} and how the Dolgushev-Fedosov resolutions - the content of theorem \ref{holdgr} - work in the CY setting. The arguments in this section are the same as those in section 7 of \cite{wica12}, we just elaborate a bit more. 

\subsection{Formal polyvector fields}

\medskip
 Given a $d$-dimensional Calabi-Yau manifold $(X,\omega)$, we can interpret $\omega\in \mathcal{A}^d_{hol}$, with constant formal coefficients. Contracting with $\omega\in \mathcal{A}^*_{hol}$ induces an isomorphism  
 $$\iota_\omega:\mathcal{T}^*_{hol}\rightarrow  \mathcal{A}^*_{hol}$$
 and we denote by abuse of notation by $\partial^f$ the differential on $\mathcal{T}^*_{hol}$ corresponding under this isomorphism to the  $\partial^f$ differential on $\mathcal{A}^*_{hol}$. As in section \ref{pvfields} this give us following.
\begin{df}\label{cyfpvf}
 The Calabi-Yau version of the dgla sheaf of formal polyvector fields is  $$\Big(\mathcal{T}^*_{hol}\llbracket t\rrbracket[1]),t\partial^f,[\_,\_]_S^f \Big),$$
 where $t$ is of cohomological degree 2.
\end{df}
Recall that for theorem \ref{holdgr} we considered de Rham forms with values in the formal dgla and perturbed the resulting dgla by the Fedosov differential. Here, we additionally need to argue that the Fedosov differential commutes with the $t\partial^f$ differential. However, assuming that $\nabla(\omega)=0$, this follows exactly as in proposition 29 of \cite{cal05}, giving us the dgla on the right of following theorem. 
\begin{theorem}\label{polyvctreso}
    Given a Calabi-Yau manifold $X$ there is a quasi-isomorphisms of dg Lie algebras (associated to the Chern connection $\nabla$) 
\begin{equation}\lambda_\nabla: \Big(PV^{*,*}(X)\llbracket t\rrbracket[1],\bar{\partial}+t\partial,[\_,\_]_S\Big)\rightarrow \Big(\Omega^*(X,\mathcal{T}^*_{hol}\llbracket t\rrbracket[1]),D+t\partial^f,[\_,\_]_S^f\Big).\end{equation}
\end{theorem}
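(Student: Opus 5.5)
The plan is to build $\lambda_\nabla$ in Theorem \ref{polyvctreso} from the map \eqref{dfrpf} of Theorem \ref{holdgr}, extended $t$-linearly. Three things then need checking. First, that it intertwines $\bar\partial+t\partial$ with $D+t\partial^f$, which is the only genuinely new point. Second, that it still respects the Schouten brackets. Third, that it remains a quasi-isomorphism after adding the $t$-terms.

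The connection is fixed as follows. On a Calabi-Yau manifold, take $\nabla$ to be the Chern connection of a Ricci-flat Kähler metric; it is torsion free and compatible with the complex structure, so Theorem \ref{holdgr} applies. Since the holonomy lies in $SU(d)$, we also have $\nabla\omega=0$. If one prefers not to invoke Yau's theorem, any torsion-free holomorphic-type connection with $\nabla\omega=0$ is enough, and locally the flat connection in coordinates where $\omega=dz_1\wedge\cdots\wedge dz_d$ works.

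For the compatibility with the differentials, recall that the Dolgushev–Fedosov map $\lambda_\nabla$ on forms is the dg algebra map of Theorem \ref{holdgr}, which intertwines $\partial+\bar\partial$ with $\partial^f+D$. The key step is to show that $\lambda_\nabla$ on polyvector fields and $\lambda_\nabla$ on forms are compatible with contraction by $\omega$:
$$\lambda_\nabla(\iota_\omega\gamma)=\iota_{\omega}\lambda_\nabla(\gamma),$$
where on the right $\omega$ is viewed as a fiberwise constant element of $\mathcal{A}^d_{hol}$. The condition $\nabla\omega=0$ ensures that the Fedosov lift of $\omega$ is exactly the constant-coefficient form $\omega$. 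Indeed, the lift is the unique flat section of $D$ whose constant term is $\omega$, and the constant form is $D$-closed precisely when $\nabla\omega=0$; this is the content of the argument cited in proposition 29 of \cite{cal05}. Once this identity holds, $\iota_\omega$ conjugates $\partial$ to $\partial^f$, so $\lambda_\nabla\circ\partial=\partial^f\circ\lambda_\nabla$ follows from the form-level statement. The same identity shows that $D$ commutes with $t\partial^f$, so the target is a dgla. The bracket is handled by Theorem \ref{holdgr} together with $t$-linearity. I expect this step to be the main obstacle: one has to check carefully that $D$ preserves $\iota_\omega$, which reduces to showing that $\omega$ is annihilated by both the $\nabla$ part and the nonlinear Fedosov correction. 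That correction is built from the torsion and curvature of $\nabla$, and I would try to show it acts on $\omega$ through a trace that vanishes.

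For the quasi-isomorphism, filter both sides by powers of $t$. Because the $t$-power series are completed, this filtration is complete and exhaustive. On the associated graded pieces the differentials reduce to $\bar\partial$ and $D$, and there $\lambda_\nabla$ is a quasi-isomorphism by \eqref{dfrpf}. A standard comparison argument for complete filtrations, namely the spectral sequence together with a Mittag-Leffler argument for the $t$-adic limit, then shows that $\lambda_\nabla$ is a quasi-isomorphism for the full differentials.
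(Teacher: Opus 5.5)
Your route is the paper's: extend \eqref{dfrpf} $t$-linearly, reduce compatibility with $\partial$ to commutativity of the square \eqref{slmggga}, get that from $\lambda_\nabla(\omega)=\omega$, and conclude with the $t$-filtration. Two small points on the parts that work. Turning $\lambda_\nabla(\omega)=\omega$ into $\lambda_\nabla(\iota_\omega\gamma)=\iota_\omega\lambda_\nabla(\gamma)$ also uses that $\lambda_\nabla$ respects contractions (Proposition 5.11 of \cite{cal05}); you only use this implicitly. And since $t$ has degree $2$ and both sides are bounded in the remaining degrees, the $t$-filtration is finite in each degree, so you need no Mittag-Leffler argument.

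The real problem is the step you flag, together with your fallback that any torsion-free connection of holomorphic type with $\nabla\omega=0$ suffices. That fallback is false.

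\begin{itemize}
\item For the fibrewise constant $\omega$ one has $D\omega=\nabla\omega+\operatorname{div}(A)\,\omega$. So "the constant form is $D$-closed precisely when $\nabla\omega=0$'' really requires $\operatorname{div}(A)=0$.
\item The lowest term $A_2=\delta^{-1}R$ has divergence built from two contractions of the curvature.
\item The $(1,1)$-part contributes $\operatorname{tr}R_{\bar j l}\,d\bar z^j y^l$. The two contractions agree here because $R^k_{\bar j i,l}=\partial_{\bar j}\Gamma^k_{il}$ is symmetric in $i,l$, and this term is killed by $\nabla\omega=0$.
\item The $(2,0)$-part contributes a nonzero multiple of $\mathrm{Ric}_{jl}\,dz^j y^l$, where $\mathrm{Ric}_{jl}=R^k_{kj,l}$. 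Here $\nabla\omega=0$ together with Bianchi only makes $\mathrm{Ric}_{jl}$ symmetric, not zero.
\end{itemize}

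A concrete counterexample: on $\mathbb{C}^2$ take $\omega=dz_1\wedge dz_2$ and let $\Gamma^1_{22}=z_1$ be the only nonzero Christoffel symbol. This connection is torsion free and $\nabla\omega=0$, but $\mathrm{Ric}_{22}=1$, so $D\omega\neq 0$. Indeed its exponential map is $(u,v)\mapsto(a\cos v+u\tfrac{\sin v}{v},\,b+v)$, with Jacobian $\tfrac{\sin v}{v}\neq 1$. A connection glued from local flat ones by a partition of unity will generically have such $(2,0)$-curvature.

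What rescues your main choice is Kählerness. The Chern connection of a Kähler metric has curvature of pure type $(1,1)$, so the Ricci-type term never appears. The obstruction then reduces to $\operatorname{tr}R$, and at higher order to its covariant derivatives via the Bianchi identity, all of which vanish. This is the argument you still owe, and you should drop the fallback. The paper's own one-line justification, that $\lambda_\nabla$ is built iteratively in $y$ so vanishing of the linear terms suffices, silently relies on the same fact.
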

\begin{proof}
We define the map $\lambda_\nabla$ by $t$-linearly extending the map \eqref{dfrpf}. This gives us a well defined map of dgla if we can argue that following diagram \eqref{slmggga} commutes. Then the fact that we obtain a quasi-isomorphism follows from a standard spectral sequence argument and theorem~\ref{holdgr}.

Indeed, since $\omega$ is flat with respect to a connection $\nabla$ we claim to obtain a commutative diagram of chain complexes
 \begin{equation}\label{slmggga}
\begin{tikzcd}
 \Big(PV^{*,*}(X),\partial\Big)\arrow{r}{ \lambda_{\nabla}}\arrow{d}{\iota_\omega}& \Big(\Omega^*(X,\mathcal{T}^*_{hol}),\partial^f\Big)\arrow{d}{\iota_\omega}\\
  \Big(\Omega^{*,*}(X),\partial\Big)\arrow{r}{ \lambda_{\nabla}}& \Big(\Omega^*(X,\mathcal{T}^*_{hol}),\partial^f\Big).
 \end{tikzcd}
\end{equation}
Notice that if we can argue that the diagram commutes in graded vector space the claim follows, since the vertical maps are isomorphisms and we know that all but the upper horizontal map commutes with the differentials. Notice further that if we can argue that $\lambda_{\nabla}(\omega)=\omega$ commutativity of the diagram follows, since the maps $\lambda_{\nabla}$ respect the structure of dgla modules of forms over the dgla's of polyvector fields (proposition 5.11 of \cite{cal05}), ie. the maps $\lambda_{\nabla}$ are compatible with the contraction morphisms. We recall that in local coordinates $(z_i\bar{z}_i)$ the map $\lambda_{\nabla}$ is to leading order in the formal variables $(y_i)$ defined as
\begin{equation}\label{above}
    \lambda_{\nabla}(\omega)=\omega+y^i\frac{\partial}{\partial z_i} \omega+y^i\Gamma_{ik}^k\cdot \omega +\mathcal{O}(y^2),
\end{equation}    
where the third summand involving the Christoffel symbols $\Gamma_{ij}^k$ of the connection $\nabla$ comes from the action by Lie derivative of vector fields on forms compared to formula 5.23 of \cite{cal05}.
In general $\lambda_{\nabla}$ is defined iteratively in powers of $(y_i)$, which means in particular that $\lambda_{\nabla}(\omega)=\omega$ if in equation \eqref{above} the terms linear in $(y_i)$ are zero. However, this is exactly the condition that $\nabla\omega=0$ written in coordinates, so that we can conclude commutativity of diagram \eqref{slmggga}.     
\end{proof}

\subsection{Formal polydifferential operators}
 Given a $d$-dimensional Calabi-Yau manifold $(X,\omega)$, we can interpret $\omega\wedge\bar{\omega}\in \mathcal{A}^{d,d}$.  Let us define the formal version of the cyclic Hochschild cochain dg Lie algebra from section \ref{cychochdgla}. As in construction \ref{nfcon} one can define 
$$ pCyc^n(\mathcal{O}_{hol}^f):=\big(\mathcal{D}^{n+1}_{hol}[1]\otimes_{\mathcal{O}_{hol}^f} \mathcal{A}^{d,d}\big)_{\mathcal{T}^1(X)}$$
and show that there is an isomorphism
$$\phi^f:\ \mathcal{D}^{n}_{hol}[1] \cong pCyc^n(\mathcal{O}_{hol}^f)[2d-2],$$
induced by cup product with the identity operator and tensoring with $\omega\wedge\bar{\omega}$.
This isomorphism allows to define an action by the cyclic group on formal Hochschild cochains whose invariants we denote by $\mathcal{D}^{*}_{hol}[1]^\sigma$.  As in section \ref{cychochdgla} one shows that the fiberwise Hochschild differential and Gerstenhaber bracket leave invariant the cyclic action, which gives us following definition.
\begin{df}\label{123467}
 The Calabi-Yau version of the dgla sheaf of formal polydifferential operators is  
$$
    \Big(\mathcal{D}^{*}_{hol}[1]^\sigma,d_H^f,[\_,\_]_G^f\Big).
$$
 \end{df}
 
 \medskip

Again, we would like to consider de Rham forms with values in the dgla from definition \ref{123467} and then to perturb by the Fedosov differential. For this we need to argue that the Fedosov differential leaves invariant the space of de Rham forms with values in formal cyclic Hochschild cochains. For that we recall from \cite{cal05} that we can write the Fedosov differential
$$D_\nabla:\ \Omega^*(\mathcal{D}^{*}_{hol}[1])\rightarrow \Omega^{*+1}(\mathcal{D}^{*}_{hol}[1])$$
 as
$$D_\nabla=d_{dR}+[Q,\_]_G$$
where $Q$ is a vector valued one form. As in proposition 29 of \cite{wica12} one has that $div(Q)=0$ since $\nabla\omega=0$. Now let us assume that $\alpha\in \Omega^*(\mathcal{D}^{n}_{hol}[1]^\sigma)$, which means that ${[id \cup \alpha \otimes \omega\wedge\bar{\omega}]}$ is cyclically symmetric as a (de Rham valued) formal $(n+1)$-Hochschild cochain tensored with the volume form modulo the action of formal vector fields. Then we want to show that  ${D_\nabla\alpha\in \Omega^{*+1}(\mathcal{D}^{n}_{hol}[1]^\sigma)},$
or in other words that 
$$[id \cup ((d_{dR}+[Q,\_]_G )\alpha) \otimes \omega\wedge\bar{\omega} ]$$
is cyclically symmetric in the above sense. We  compute
\begin{align*}
&[id \cup ((d_{dR}+[Q,\_]_G )\alpha) \otimes \omega\wedge\bar{\omega} ] \\
=&[d_{dR}(id \cup  \alpha) \otimes \omega\wedge\bar{\omega} ]+[id \cup  \alpha\{Q\} \otimes \omega\wedge\bar{\omega} ]+[id \cup  \{Q\}\alpha \otimes \omega\wedge\bar{\omega} ]\\
=&[d_{dR}(id \cup  \alpha) \otimes \omega\wedge\bar{\omega} ]+[id \cup  \alpha\{Q\} \otimes \omega\wedge\bar{\omega} ]-[id\{Q\} \cup  \alpha \otimes \omega\wedge\bar{\omega} ]\\
=&[(d_{dR}+(\_)\{Q\})(id \cup  \alpha) \otimes \omega\wedge\bar{\omega} ]
\end{align*}
where the second line follows since the de Rham differential acts trivial on the identity operator and by definition of the Gerstenhaber bracket in terms of the brace operation, see eg. section 2 of \cite{wica12}.  To get from the second to third line we used that
$$[id \cup  \{Q\}\alpha \otimes \omega\wedge\bar{\omega} ]=-[ id\{Q\} \cup  \alpha \otimes \omega\wedge\bar{\omega} ]-[ id \cup  \alpha \otimes Q(\omega\wedge\bar{\omega}) ],$$
since we are taking coinvariants with respect to the vector field action, and that  $$Q(\omega\wedge\bar{\omega}) =div(Q)(\omega\wedge\bar{\omega})=0$$
since $div(Q)=0$ as mentioned above. The element $[(d_{dR}+(\_)\{Q\})(id \cup  \alpha) \otimes \omega\wedge\bar{\omega} ]$ is inherently cyclically symmetric since obtained by acting with a vector field on the cyclically symmetric element $\alpha$. Thus the Fedosov differential leaves invariant the space of cyclic formal Hochschild cochains and the dg Lie algebra on the right of following theorem is well defined.
\begin{theorem}\label{hcochainsdfr}
Given a Calabi-Yau manifold $X$ there is a quasi-isomorphism of dg Lie algebras
\begin{equation}
\lambda_\nabla:\ \Omega^{0,*}(X,\Big(CH^{*}(\mathcal{O}_{hol})[1]^\sigma,+d_H,[\_,\_]_G\Big))\rightarrow \Big(\Omega^*(X,\mathcal{D}_{hol}[1]^\sigma),d_H^f+D,[\_,\_]_S^f\Big).\end{equation}
\end{theorem}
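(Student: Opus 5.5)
The plan is to obtain the map as the restriction of the quasi-isomorphism \eqref{dfrpo} of theorem \ref{holdgr}, in the same way theorem \ref{polyvctreso} was obtained from \eqref{dfrpf}. Take $\nabla$ to be the Chern connection, so that $\nabla\omega=0$ and hence $\nabla(\omega\wedge\bar\omega)=0$. The argument has three steps, which I would carry out in order.

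\textbf{Step 1: $\lambda_\nabla$ preserves cyclic invariants.} First I will check that $\lambda_\nabla$ sends $\omega$-cyclic cochains to cyclic formal cochains, i.e.\ that it intertwines the isomorphisms $\phi$ of \eqref{blabla} and $\phi^f$. The map $\lambda_\nabla$ is the Taylor expansion along the exponential map of $\nabla$. It is compatible with cup products, with the identity operator $id$, with tensoring by forms, and with the action of vector fields, so it descends to the coinvariant spaces $pCyc$. It therefore suffices to show $\lambda_\nabla(\omega\wedge\bar\omega)=\omega\wedge\bar\omega$. Exactly as in the computation \eqref{above}, the terms linear in $y$ are the coordinate expression of $\nabla(\omega\wedge\bar\omega)$, which vanishes. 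Since $\lambda_\nabla$ is built iteratively in powers of $y$, all higher terms vanish too. Because $\sigma$ is defined by transporting the rotation along $\phi$, $\phi^f$, the map $\lambda_\nabla$ commutes with $\sigma$ and restricts to invariants.

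\textbf{Step 2: the restriction is a dgla map.} The target subcomplex is closed under $D$ by the divergence-free computation preceding the theorem. It is closed under $d^f_H$ and the bracket by definition \ref{123467}. The restricted map is therefore a dgla morphism, being a restriction of \eqref{dfrpo}.

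\textbf{Step 3: quasi-isomorphism.} This is the main obstacle. Over $\mathbb{C}$ the group $\mathbb{Z}_{n+1}$ is finite, so taking invariants is an exact functor: it is the image of the averaging projector $\frac{1}{n+1}\sum_k\sigma^k$. The $\sigma$-actions on both sides commute with the differentials, which follows from Steps 1 and 2, and $\lambda_\nabla$ is $\sigma$-equivariant. Hence, arity by arity, the invariant part of a quasi-isomorphism is again a quasi-isomorphism.

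One subtlety is that \eqref{dfrpo} must respect the arity grading so that the averaging is available degreewise. The Hochschild differential mixes arities, but $\sigma$ only acts within a fixed arity. I would handle this by filtering by arity, or by noting that averaging commutes with both $d_H$ and $D$ because they preserve cyclic invariance. In that case the projector $P$ is a chain map on both sides, and $\lambda_\nabla P=P\lambda_\nabla$ gives the invariants as a retract of a quasi-isomorphism.

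If direct equivariance of $D$ with $\sigma$ on the whole (non-invariant) complex fails, I would fall back on the proof of theorem \ref{holdgr} itself, following \cite{cal05}. There, quasi-isomorphism comes from a contracting homotopy for the de Rham differential in the fiber variables $y$, via a spectral sequence on the $y$-filtration. I would check that this homotopy can be averaged over $\sigma$, so that the same spectral sequence argument applies on invariants.
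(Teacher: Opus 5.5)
Your proposal is correct and is essentially the paper's proof. Both restrict \eqref{dfrpo}, use $\lambda_\nabla(\omega)=\omega$ (from $\nabla\omega=0$) and $\lambda_\nabla(id)=id$ to get $\phi^f\circ\lambda_\nabla=(\lambda_\nabla\otimes\lambda_\nabla)\circ\phi$ and hence $\sigma$-equivariance, and conclude from exactness of finite-group invariants in characteristic zero.

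Of your two ways to justify that last step, keep the filtration by arity and drop the other. On the associated graded only $\bar\partial$ and $D$ survive, and $D$ commutes with $\sigma$ arity by arity, because the divergence-free computation preceding the theorem never uses cyclicity of $\alpha$. The claim that averaging commutes with $d_H$ because $d_H$ preserves cyclic cochains is false on non-cyclic cochains, exactly as Connes' $b$ fails to commute with the cyclic operator.
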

\begin{proof}
Indeed, as in \cite{wica12} we can define the map $\lambda_\nabla$ as the restriction of the map \eqref{dfrpo} to the subspace of cyclic Hochschild cochains, which lands in formal cyclic Hochschild cochains since $\phi^f\circ \lambda_\nabla = (\lambda_\nabla\otimes \lambda_\nabla)\circ \phi$, which follows from $\lambda(\omega)=\omega$ and $\lambda(id)=id$, and since $\lambda$ commutes with the cyclic shift operator as $\lambda$ is built from derivations.

Then we can directly conclude that we obtain a quasi-isomorphism since we are in characteristic zero where taking invariants with respect to a finite group action is exact.

\end{proof}

\section{Proof of theorem A}
The crucial input for the proof of theorem A is the local version of theorem A on flat space, which follows directly from \cite{wica12} by base change. We then explain, following \cite{do05, cal05,wica12}, and via the work from section \ref{dfs} how to globalize and to obtain theorem A in general.
\subsection{Local version }
Indeed, for the local version all work is already done.
\begin{theorem}[\cite{kon03, wica12}]
There is a quasi-isomorphism of dg Lie algebras
\begin{equation}
    \Big(PV^*(\mathbb{R}^d_{\text{formal}})\llbracket t\rrbracket[1],t\partial,[\_,\_]_S\Big) \rightarrow \Big(CH^*(\mathbb{R}\llbracket x_1,\cdots x_d\rrbracket)[1]^\sigma,d_H,[\_,\_]_G\Big),
\end{equation}
 satisfying the properties from proposition 27 of \cite{wica12}.
\end{theorem}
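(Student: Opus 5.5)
The plan is to read the statement off from Willwacher--Calaque's cyclic formality theorem for $\mathbb{R}^d$ with the standard volume form, and from Kontsevich's original construction. No new construction is needed; the work is to match conventions.

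\textbf{Step 1: identify both sides with those of \cite{wica12}.} On the left, $PV^*(\mathbb{R}^d_{\text{formal}})\llbracket t\rrbracket[1]$ with $t\partial$ is the formal-coefficient version of the divergence complex $(T_{poly}\llbracket u\rrbracket, u\,\mathrm{div})$. Here $\partial$ is the divergence of the standard volume form $dx_1\wedge\cdots\wedge dx_d$, transported through contraction exactly as in Section~\ref{pvfields}, and $t$ corresponds to $u$ of degree $2$. On the right, $CH^*(\mathbb{R}\llbracket x\rrbracket)[1]^\sigma$ is the space of cochains that are cyclic with respect to the volume form. It is defined through the isomorphism $\phi$ of Construction~\ref{nfcon}, which matches the cyclic polydifferential operators $D_{poly}^{\sigma}$ of \cite{wica12}, Section~2.2.

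\textbf{Step 2: produce the morphism.} I take the cyclic formality morphism of \cite{wica12} (their main theorem, with the properties listed in their Proposition~27). It is an $L_\infty$ quasi-isomorphism $T_{poly}\llbracket u\rrbracket \to D_{poly}^{\sigma}$. Its Taylor components are given by Kontsevich-type integrals, namely configuration-space integrals over the disk with an extra ingredient for the $u$-direction. It is written for polynomial or smooth functions on $\mathbb{R}^d$. Its components are $\mathbb{R}$-multilinear and polydifferential with constant universal coefficients, so they extend continuously, by base change, to formal power series coefficients $\mathbb{R}\llbracket x_1,\dots,x_d\rrbracket$. The $L_\infty$ relations are universal identities between graph weights, so they survive this extension.

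\textbf{Step 3: the quasi-isomorphism property.} I check the linear component. It is the cyclic HKR map, sending $\gamma\,t^k$ to its Hochschild--Kostant--Rosenberg image composed with the $t$-dependent correction. Filtering by powers of $t$ and by polyvector degree, the associated graded is the ordinary HKR map for $\mathbb{R}\llbracket x\rrbracket$, which is a quasi-isomorphism. Taking $\sigma$-invariants is exact in characteristic zero, as in Theorem~\ref{hcochainsdfr}. A spectral sequence comparison then gives the result. Completeness of the $t$-adic filtration lets the spectral sequence converge.

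\textbf{Main obstacle.} The hardest point is the properties of Proposition~27 that are needed later for globalization. These are equivariance under linear vector fields preserving the volume form, i.e.\ $\mathfrak{sl}_d$, vanishing of higher components when one argument is a linear vector field, and the corresponding statement for divergence-free vector fields. They are needed so that the Fedosov twist by $Q$ (with $\mathrm{div}\,Q=0$) can be applied as in \cite{do05,cal05}. These properties are proved in \cite{wica12} at the level of graphs. I would verify that base change to formal coefficients, and later complexification to $\mathbb{C}\llbracket y\rrbracket$, does not affect them. This holds because they are again identities of universal coefficients, so they extend verbatim.
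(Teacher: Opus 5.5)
Your overall route is the paper's: the statement is simply quoted from \cite{kon03, wica12}. Your Steps 1 and 2, and your remarks on the properties of Proposition 27, are a reasonable unpacking of that citation. The problem is Step 3, which as written would not establish the quasi-isomorphism property.

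First, the cyclic group actions on the individual spaces $CH^n$ do not commute with $d_H$; the $\sigma$-invariants merely form a subcomplex. So there is no group acting by chain maps to which exactness of invariants in characteristic zero could be applied. The cohomology of $CH^*(\mathbb{R}\llbracket x_1,\dots,x_d\rrbracket)[1]^\sigma$ is (local) cyclic cohomology, not the invariant part of Hochschild cohomology.

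Second, the target carries no $t$, and the $t$-adic associated graded of the source has zero differential. So there is no $E_1$-comparison with the ordinary HKR map $T_{poly}\to D_{poly}$. Concretely, the cohomology of the source is $\ker\partial\oplus\prod_{j\ge 1}\mathbb{R}\,t^j\,\partial_{x_1}\wedge\cdots\wedge\partial_{x_d}$. The classes $t^j\,\partial_{x_1}\wedge\cdots\wedge\partial_{x_d}$ must be matched with the images, under Connes' periodicity operator $S^j$, of the class of the HKR cocycle of $\partial_{x_1}\wedge\cdots\wedge\partial_{x_d}$. These live in arity $d+2j$, and an HKR comparison does not see them at all.

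To repair this, you have two options. The first is to drop Step 3 and use that the quasi-isomorphism statement is already part of the theorem of \cite{wica12} you are quoting; this is all the paper does. The second, if you want to check it directly for formal coefficients, is to pass from the $\sigma$-invariant complex to the analogue of Connes' $(b,B)$-model $(CH^*\llbracket u\rrbracket, d_H+uB)$. This comparison with Connes' $\lambda$-complex is where characteristic zero is genuinely used. Under HKR and contraction with the volume form, $B$ corresponds to $\partial=\mathrm{div}$. On that model the $u$-adic filtration does give an $E_1$-comparison with the ordinary HKR theorem for $\mathbb{R}\llbracket x_1,\dots,x_d\rrbracket$. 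You would still need to check that the linear component of the cyclic morphism, composed into this model, agrees in cohomology with $t^j\gamma\mapsto u^j\,\mathrm{HKR}(\gamma)$.
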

Since $\mathbb{R}\rightarrow\mathbb{C}$ is flat it follows:
\begin{corollary}\label{localres}
    There is a quasi-isomorphism of dg Lie algebras
\begin{equation}\label{asdfgh}
    \Big(PV^*(\mathbb{C}^d_{\text{formal}})\llbracket t\rrbracket[1],t\partial,[\_,\_]_S\Big) \rightarrow \Big(CH^*(\mathbb{C}\llbracket z_1,\cdots z_d\rrbracket)[1],d_H,[\_,\_]_G\Big),
\end{equation}  
satisfying the properties from proposition 27 of \cite{wica12}.
\end{corollary}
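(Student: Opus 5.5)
The statement to prove is Corollary \ref{localres}. The plan is to obtain it from the preceding theorem of \cite{kon03, wica12} by extension of scalars along $\mathbb{R}\to\mathbb{C}$, checking that every piece of structure is compatible with this base change. Write $\mathcal{U}^{\mathbb{R}}$ for the real quasi-isomorphism. All structure maps of $\mathcal{U}^{\mathbb{R}}$ are given by universal formulas: sums over Kontsevich-type graphs with real weights, modified to be compatible with $t\partial$ as in \cite{wica12}, which contract polyvector fields into polydifferential operators. Both sides are built functorially from the ring of formal power series and the $\mathbb{R}$-linear dual of its generators. One therefore defines $\mathcal{U}$ in \eqref{asdfgh} by the same formulas with the coordinates $x_i$ replaced by $z_i$. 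Equivalently, one applies $(\_)\hat\otimes_{\mathbb{R}}\mathbb{C}$ to $\mathcal{U}^{\mathbb{R}}$.

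The first step is to identify the complexified sides. The graded components must be handled carefully, since they are products over polynomial degree, i.e.\ formal completions. One has $PV^*(\mathbb{R}^d_{formal})\llbracket t\rrbracket\hat\otimes\mathbb{C}\cong PV^*(\mathbb{C}^d_{formal})\llbracket t\rrbracket$, with the divergence $\partial$ taken with respect to $dz_1\cdots dz_d$. Similarly, the $\mathbb{C}$-linear polydifferential cochains on $\mathbb{C}\llbracket z\rrbracket$ are the completed complexification of the real ones, and $\sigma$-invariance is preserved, since the cyclic action is defined by the same real formulas with the constant volume form. The Schouten bracket, $t\partial$, $d_H$ and $[\_,\_]_G$ are all defined over $\mathbb{R}$ and extend $\mathbb{C}$-linearly. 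The base-changed map is therefore a map of dg Lie algebras, or of $L_\infty$-algebras if $\mathcal{U}$ is only an $L_\infty$ morphism. The properties from proposition 27 of \cite{wica12} are $GL_d$-equivariance, vanishing of higher components on linear vector fields, and the formula for the linear part. These are polynomial identities in the structure constants, so they persist.

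The remaining step, and the only real issue, is the quasi-isomorphism property. Here flatness of $\mathbb{C}$ over $\mathbb{R}$ (indeed $\mathbb{C}\cong\mathbb{R}^2$) gives $H(V\otimes\mathbb{C})=H(V)\otimes\mathbb{C}$. The completion is harmless because all differentials and the map $\mathcal{U}$ preserve a filtration (polynomial degree plus polyvector degree, and $t$-degree), and the complexes split as products of finite-dimensional pieces in each bidegree. Hence complexification commutes with the completion degreewise. Since $\otimes\mathbb{C}$ is just $V\oplus iV$ as real complexes, the induced map on cohomology is $H(\mathcal{U}^{\mathbb{R}})\oplus H(\mathcal{U}^{\mathbb{R}})$, which is an isomorphism.
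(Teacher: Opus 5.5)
Your proposal is correct and takes the paper's route: the paper proves the corollary in one line, by base change of the real Willwacher--Calaque morphism along the flat extension $\mathbb{R}\to\mathbb{C}$, and you spell out why the structure maps, the cyclic action and the properties from proposition 27 of \cite{wica12} survive this. Your discussion of completions and finite-dimensional pieces is unnecessary: since $\mathbb{C}$ is finite free over $\mathbb{R}$, the functor $-\otimes_{\mathbb{R}}\mathbb{C}$ already commutes with the infinite products, as your closing $V\oplus iV$ argument shows.
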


\subsection{Globalization via Dolgushev-Fedosov}
Let us now explain how we can globalize corollary \ref{localres} to arbitrary $d$-dimensional Calabi-Yau manifold $(X,\omega)$.

\smallskip
Let $V$ denote an open disk in $X$. Then we obtain a quasi-isomorphisms
\begin{equation}\label{locally}
\mathcal{U}_{CY,V}:\ \Big(\Omega^*(V,\mathcal{T}^*_{hol}\llbracket t\rrbracket[1]),d_{dR}+t\partial^f,[\_,\_]_S^f\Big)\rightarrow \Big(\Omega^*(V,\mathcal{D}_{hol}[1]^\sigma),d_{dR}+d_H^f,[\_,\_]_S^f\Big),
 \end{equation}
since the fibers over a point of domain and codomain of \eqref{locally} coincide with domain and codomain of morphism \eqref{asdfgh} and since $V$ is contractible. 

Locally on $V$ we can write the Fedosov differential (given the Chern connection $\nabla$) as $D_\nabla=d_{dR}+Q$, where it follows as in \cite{wica12} that $Q$ satisfies the Maurer-Cartan equation of the dg Lie algebra of the domain of morphism \eqref{locally}. One has
$$\sum_{n=1}^\infty\frac{1}{n!}\mathcal{U}^n_{CY,V}(Q,\cdots Q)=\mathcal{U}^1_{CY,V}(Q)=\mathcal{U}^1_V(Q)=Q,$$
which follows, since $Q$ is locally a divergence free vector field, by property 3 and by property 2 from proposition 27 of \cite{wica12}. Thus by twisting the morphism \eqref{locally} by the Maurer-Cartan element $Q$ we obtain a quasi-isomorphism of dg Lie algebras 
\begin{equation}\label{ert}
    \Big(\Omega^*(V,\mathcal{T}_{hol}\llbracket t\rrbracket,D+t\partial^f,[\_,\_]_S^f\Big)\rightarrow \Big(\Omega^*(V,\mathcal{D}_{hol}[1]^\sigma),D+d_H^f,[\_,\_]_S^f\Big).
\end{equation}
We can extend the quasi-isomorphism \eqref{ert} to the whole manifold $X$ using a partition of unity (this is well defined as under coordinate change the vector field $Q$ changes by a vector field linear in the formal variables, see equation 58 of \cite{do05}, which vanishes under morphism \eqref{locally} by property 4 of \cite{wica12}), so that we obtain:
\begin{theorem}
    Given a Calabi-Yau manifold $(X,\omega)$ there is a quasi-isomorphism of dg Lie algebras
    \begin{equation}
    \Big(\Omega^*(X,\mathcal{T}_{hol}\llbracket t\rrbracket,D+t\partial^f,[\_,\_]_S^f\Big)\rightarrow \Big(\Omega^*(X,\mathcal{D}_{hol}[1]^\sigma),D+d_H^f,[\_,\_]_S^f\Big).
\end{equation}
\end{theorem}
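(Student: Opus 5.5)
The plan is to glue the local quasi-isomorphisms \eqref{ert} into a global one by the standard Dolgushev argument.

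First, I cover $X$ by open disks $\{V_\alpha\}$, each with holomorphic coordinates in which $\omega$ is the standard form, and fix a partition of unity subordinate to the cover. On each $V_\alpha$, twisting \eqref{locally} by $Q_\alpha$ gives the $L_\infty$ quasi-isomorphism $\mathcal{U}_{CY,V_\alpha}^{Q_\alpha}$ with Taylor components
$$\big(\mathcal{U}^{Q_\alpha}_{CY,V_\alpha}\big)^n(\gamma_1,\dots,\gamma_n)=\sum_{k\ge 0}\frac{1}{k!}\,\mathcal{U}^{n+k}_{CY,V_\alpha}(Q_\alpha,\dots,Q_\alpha,\gamma_1,\dots,\gamma_n).$$

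The second step is to show these twisted components agree on overlaps $V_\alpha\cap V_\beta$. The two local formal trivialisations differ by a fiberwise formal change of coordinates. Because both coordinate systems preserve $\omega$, this change is volume preserving, and it acts compatibly on both sides; the local morphism of Corollary \ref{localres} is $GL_d$-equivariant (property 1 of proposition 27 of \cite{wica12}). The forms $Q_\alpha$ and $Q_\beta$ then differ by a term $\xi$ that is linear in the fiber variables $y$ (equation 58 of \cite{do05}). Property 4 says that $\mathcal{U}^{m}_{CY}$ vanishes for $m\ge 2$ whenever one argument is linear in $y$. Hence every term in the sum containing $\xi$ drops out, and the twisted components coincide on overlaps. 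So the $\mathcal{U}^{Q_\alpha}_{CY,V_\alpha}$ glue to well-defined maps on all of $X$. Since they are defined pointwise this way, the partition of unity is really only needed to choose the global Fedosov form $Q$ with $D=d_{dR}+[Q,\_]$. The $L_\infty$ relations then hold globally because they hold locally.

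The third step is to check that the glued morphism is a quasi-isomorphism. I filter both sides by form degree: the filtration is complete and bounded since forms live in degrees $0$ to $2d$. On the associated graded the differential reduces to $t\partial^f$, respectively $d_H^f$, fiberwise. The linear component of the twisted map reduces to the fiberwise $\mathcal{U}^1$ of Corollary \ref{localres}, which is a quasi-isomorphism on each fiber. It stays one after taking $C^\infty$ sections, since those sections form a fine sheaf. A spectral sequence comparison then gives the quasi-isomorphism.

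The main obstacle is the overlap compatibility in the second step. There I must make sure that the properties of proposition 27 of \cite{wica12}, stated there for real divergence-free and linear vector fields, survive base change to $\mathbb{C}$ and apply to the holomorphic formal setting. The key check is that the transition term $\xi$ is $\omega$-divergence free, so that property 3 kills the higher terms also in the $t$-dependent cyclic version. I would verify this first, using $\nabla\omega=0$ as in the proof of Theorem \ref{polyvctreso}.
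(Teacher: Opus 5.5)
Your proposal is correct and follows essentially the same route as the paper: twist the fiberwise local morphism on each disk by the local Fedosov form $Q$, and observe that on overlaps the local forms differ by a vector field linear in $y$ (divergence-free, since both charts make $\omega$ standard), which property 4 of \cite{wica12} removes from all higher Taylor components, so the twisted morphisms glue. Your form-degree filtration argument for the quasi-isomorphism property makes explicit a step the paper only asserts, and your point that the gluing rests on agreement on overlaps rather than on a partition of unity is the accurate reading of the paper's parenthetical remark (strictly speaking, only $D$ and the connection are global, not a single form $Q$ with $D=d_{dR}+[Q,\_]$).
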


\medskip
Thus putting everything together we can conclude with the main result of this note.
\begin{theorem}[theorem A]\label{iknmibm}
Given a $d$-dimensional Calabi-Yau manifold $(X,\omega)$ there is a quasi-isomorphism of dg Lie algebras
\begin{equation}
\mathcal{U}_{CY}:\ \Big(PV^{*,*}(X)\llbracket t\rrbracket[1],\bar{\partial}+t\partial,[\_,\_]_S\Big)\rightarrow \Big( CH^{*}_{loc}(\Omega^{0,*}(X)[1]^\sigma,d_H,[\_,\_]_S\Big) .\end{equation}
If $X$ is compact we obtain a quasi-isomorphism of dg Lie algebras
\begin{equation}\label{zuihjk}
\mathcal{U}_{CY}:\ \Big(PV^{*,*}(X)\llbracket t\rrbracket[1],\bar{\partial}+t\partial,[\_,\_]_S\Big)\rightarrow \Big(Cyc^*_{loc}(\Omega^{0,*}(X))[d-2],d_{cyc},\{\_,\_\}_o\Big).
\end{equation}
\end{theorem}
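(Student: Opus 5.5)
The argument is a zig-zag of quasi-isomorphisms assembled from the results above. From the theorem just before it (the globalized Fedosov version) and Theorems \ref{polyvctreso} and \ref{hcochainsdfr} we have
\begin{align*}
PV^{*,*}(X)\llbracket t\rrbracket[1] &\xrightarrow{\ \lambda_\nabla\ } \Omega^*(X,\mathcal{T}_{hol}\llbracket t\rrbracket[1]) \xrightarrow{\ \mathcal{U}\ } \Omega^*(X,\mathcal{D}_{hol}[1]^\sigma)\\
&\xleftarrow{\ \lambda_\nabla\ } \Omega^{0,*}\big(X,CH^*_{loc}(\mathcal{O}_{hol}(X))[1]^\sigma\big) \xrightarrow{\ \text{Lemma }\ref{ffhpoin}\ } CH^*_{loc}(\Omega^{0,*}(X))[1]^\sigma ,
\end{align*}
where $\nabla$ is the Chern connection, so $\nabla\omega=0$. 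Every arrow is a quasi-isomorphism of dg Lie algebras, and the second $\lambda_\nabla$ points the wrong way.

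I will work in the homotopy category of $L_\infty$-algebras. There, quasi-isomorphisms of dg Lie algebras are invertible as $L_\infty$-morphisms up to homotopy, by the standard homotopy transfer and inverse theorem for $L_\infty$-quasi-isomorphisms in characteristic zero. Composing the first two arrows with a homotopy inverse of the backwards $\lambda_\nabla$ and then with the map of Lemma \ref{ffhpoin} gives the $L_\infty$-quasi-isomorphism $\mathcal{U}_{CY}$. Lemma \ref{ffhpoin} itself is a zig-zag through the sheaf-level dgla, so it also has to be inverted at the level of homotopy categories of fine sheaves before taking global sections. This is why "quasi-isomorphism" in the statement should be read as an $L_\infty$-quasi-isomorphism, or an isomorphism in the homotopy category.

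For the compact case I post-compose with the isomorphism $F$ of Lemma \ref{iitcc}. This gives \eqref{zuihjk}, with the shift conventions adjusted as in that lemma.

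The main obstacle is to check that the arrows really form a diagram of dgla's with compatible cyclic structures. Concretely, the globalized morphism must land in the $\sigma$-invariant formal cochains, and the two Fedosov resolutions must use the same $D=D_\nabla$. The first point rests on $\mathcal{U}^1(Q)=Q$ and on the vanishing of the higher terms on a divergence-free $Q$, which are properties 2–4 of proposition 27 of \cite{wica12}, already invoked above. The second point is exactly the content of Theorems \ref{polyvctreso} and \ref{hcochainsdfr}. I would double-check that the partition-of-unity gluing preserves $\sigma$-invariance, which I expect to hold because the gluing only uses $C^\infty(X)$-linear combinations and $\sigma$ is $C^\infty(X)$-linear.
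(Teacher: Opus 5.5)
Your proposal is correct and is essentially the paper's proof: compose the Dolgushev--Fedosov resolution of Theorem \ref{polyvctreso}, the globalized twisted morphism, the resolution of Theorem \ref{hcochainsdfr} and Lemma \ref{ffhpoin}, then apply Lemma \ref{iitcc} in the compact case. You also note that the backwards $\lambda_\nabla$ and the sheaf-level zig-zag in Lemma \ref{ffhpoin} make the result an $L_\infty$-quasi-isomorphism (an isomorphism in the homotopy category) rather than a single dg Lie map; the paper leaves this implicit, and your reading of the statement is the right one.
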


\medskip
\begin{proof}
  This follows from combining theorem \ref{iknmibm} above, theorem \ref{hcochainsdfr} and theorem \ref{polyvctreso} about the Dolgushev-Fedosov resolutions and lemma \ref{ffhpoin} (as well as lemma \ref{iitcc} for the compact case).
\end{proof}

We finish by recording how the morphism \eqref{zuihjk} is related to (the complex version of) Kontsevich's original morphism \cite{kon03}, that is theorem 5.2 of \cite{cal05} applied to the holomorphic Lie algebroid $T^{1,0}(X).$ 
\begin{prop}\label{commdia}
 Given a Calabi-Yau manifold the morphism from theorem A is related to (the complex version of) Kontsevich's original morphism $\mathcal{U}$ through following commutative diagram   
\begin{equation}\label{relat}
\begin{tikzcd}
\Big(PV^{*,*}(X)\llbracket t\rrbracket[1],\bar{\partial}+t\partial,[\_,\_]_S\Big)\arrow{r}{ \mathcal{U}_{CY}}& \Big(CH^*_{loc}(\Omega^{0,*}(X))[1]^\sigma,d_H,[\_,\_]_G\Big)\arrow[equal]{d}
   \\
    \Big(ker_\partial PV^{*,*}(X)[1],\bar{\partial},[\_,\_]_S\big)\arrow{r}{\mathcal{U}_{CY}|_{ker}}\arrow{u}\arrow{d}&\Big(CH_{loc}^*(\Omega^{0,*}(X)[1]^\sigma,d_H,[\_,\_]_G\big)\arrow{d} \\
    \Big( PV^{*,*}(X)[1],\bar{\partial},[\_,\_]_S\Big)\arrow{r}{\mathcal{U}}&\Big(CH_{loc}^*(\Omega^{0,*}(X)[1],d_H,[\_,\_]_G\Big),
\end{tikzcd}
\end{equation}
where the vertical morphism are the canonical inclusions and $\mathcal{U}_{CY}|_{ker}$ denotes the restriction of $\mathcal{U}_{CY}$ to divergence free polyvector fields.
\end{prop}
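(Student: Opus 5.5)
The plan is to check the two squares of the diagram separately. The upper square is essentially tautological; the lower square carries the real content and reduces to a statement about the local Willwacher–Calaque morphism.

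\textbf{Upper square.} The left vertical map sends a $\partial$-closed polyvector field $\gamma$ to $\gamma$ viewed as a $t^0$-element of $PV^{*,*}(X)\llbracket t\rrbracket$. This is a map of dg Lie algebras, since $(\bar\partial+t\partial)\gamma=\bar\partial\gamma$, and $ker_\partial$ is closed under the Schouten bracket: $\partial$ is a derivation of $[\_,\_]_S$ in the BV algebra. The map $\mathcal{U}_{CY}|_{ker}$ is defined as the composite of this inclusion with $\mathcal{U}_{CY}$. The right vertical arrow is the identity, so the upper square commutes by definition.

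\textbf{Lower square.} Here I would trace both composites through the Dolgushev–Fedosov construction. Both $\mathcal{U}$ and $\mathcal{U}_{CY}$ are built the same way: apply $\lambda_\nabla$ (Theorem \ref{holdgr}, respectively Theorem \ref{polyvctreso}), twist the fiberwise local formality morphism by the Fedosov Maurer–Cartan element $Q$, and then pass back through Theorem \ref{holdgr} / Theorem \ref{hcochainsdfr} and the zig-zag of Lemma \ref{ffhpoin}. We may use the same Chern connection $\nabla$ for both, since the complex version of Kontsevich's morphism allows any torsion-free connection compatible with the complex structure. Then the resolution maps $\lambda_\nabla$ on polyvector fields agree: \eqref{dfrpf} is the $t^0$-part of the map in Theorem \ref{polyvctreso}. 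On the Hochschild side, the map of Theorem \ref{hcochainsdfr} is by construction the restriction of \eqref{dfrpo}. The inclusion of $\sigma$-invariants commutes with the Lemma \ref{ffhpoin} zig-zag, since each map in it is the restriction of the corresponding non-cyclic map.

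This reduces the lower square to the fiberwise statement. Precomposing Corollary \ref{localres} with the inclusion of divergence-free (i.e.\ $\partial$-closed, $t$-independent) formal polyvector fields and then forgetting cyclic invariance should give Kontsevich's local morphism restricted to divergence-free polyvectors. Moreover this should hold with all higher Taylor components, not just the linear one. This is exactly one of the properties in proposition 27 of \cite{wica12}: the cyclic formality morphism restricted to $t^0$ and divergence-free inputs coincides with Kontsevich's $\mathcal{U}$, whose image on such inputs is automatically cyclic. Complexifying by the flat base change $\mathbb{R}\to\mathbb{C}$ preserves this.

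It remains to check that twisting by $Q$ is compatible with the identification. The element $Q$ is divergence free and $t$-independent, so it lies in the image of the kernel. Twisting a morphism by an MC element in a sub-dgla commutes with restriction, and on both sides the twisted element equals $Q$ itself, by properties 2 and 3 of proposition 27 in \cite{wica12}. The gluing by partitions of unity is also compatible, because the same property 4 of \cite{wica12} (vanishing on fields linear in the fibre variables) is used on both sides.

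\textbf{Main obstacle.} I expect the hardest step to be the local identification: showing that the Willwacher–Calaque morphism, restricted to $t=0$ and divergence-free polyvectors, literally equals Kontsevich's graph sum rather than merely being homotopic to it. I would first try to read this off from the explicit construction in \cite{wica12}. There, the $t^0$ part with divergence-free inputs is given by the Kontsevich graph weights, and the extra graphs contributing for general inputs involve either $t$ or divergence insertions (tadpoles/wheels), which vanish on $\ker\partial$. If only a homotopy were available, the diagram would commute only up to homotopy, and the statement would have to be weakened accordingly.
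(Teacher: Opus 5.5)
Your proposal follows essentially the same route as the paper's proof. The upper square is tautological, and the lower square is reduced to a local patch by using the same Chern connection for both globalizations; locally, the cyclic morphism at $t^0$ differs from Kontsevich's only by graphs with tadpoles, which contract to (derivatives of) the divergence and therefore vanish on $\ker\partial$. Two small corrections: the paper takes this local identification from the graph description in section 4.2 of \cite{wica12}, not from proposition 27; and wheels are not among the extra graphs, since they already occur in Kontsevich's sum and need not vanish on divergence-free inputs — only the tadpoles are new.
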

\begin{proof}
    We need to explain that the lower square commutes. This follows since the difference between $\mathcal{U}_{CY}|_{ker}$ and $\mathcal{U}$ is induced from their difference on a local patch (we use the same Chern connection to globalize). The only difference between the original local morphism $\mathcal{U}$ and the local morphism $\mathcal{U}_{CY}|_{ker}$ is that in the latter case we sum over Kontsevich graphs with tadpoles allowed, see section 4.2 of \cite{wica12}. 
    However, if we contract divergence free polyvector fields via a Kontsevich graphs with a tadpoles we get zero, since the contraction with a tadpole gives the divergence, compare equation 4 of \cite{wica12}. Thus only Kontsevich graphs without tadpoles contribute to $\mathcal{U}_{CY}|_{ker}$, which gives us back $\mathcal{U}$. 
\end{proof}
\begin{remark}
The morphism $\mathcal{U}_{CY}|_{ker}$ has been studied from a geometrical perspective in \cite{fe00}.
\end{remark}